\documentclass[12pt,a4paper]{amsart}
\usepackage{mathrsfs}
\usepackage{amssymb,amsmath,amsthm,color}
\usepackage{graphicx,mcite}
\usepackage{hyperref}
\usepackage{url}
\usepackage{setspace}
\textheight=8.8in \textwidth=5.7in

\newcommand{\loc}{\mbox{loc}}

\newtheorem{theorem}{Theorem}[section]

\newtheorem{proposition}[theorem]{Proposition}

\theoremstyle{definition}

\newtheorem{remark}[theorem]{Remark}

\numberwithin{equation}{section}




\begin{document}

\title[Injectivity of twisted spherical means]
{Coxeter system of planes are sets of injectivity for the twisted spherical means on $\mathbb C^n$}

\author{R. K. Srivastava}
\address{School of Mathematics, Harish-Chandra Research Institute, Allahabad, India 211019.}
\email{rksri@hri.res.in}

\subjclass[2000]{Primary 43A85; Secondary 44A35}

\date{\today}


\keywords{Coxeter group, Hecke-Bochner identity, Heisenberg group,\\ Laguerre polynomials,
spherical harmonics, twisted convolution.}

\begin{abstract}
In this article, we prove that any pair of perpendicular planes is a set of injectivity
for the twisted spherical means (TSM) for $L^p(\mathbb C^n)~(n\geq2)$ with $1\leq p\leq2.$
Then, we imitate that any Coxeter system of even number of planes is a set of
injectivity for the TSM for $L^p(\mathbb C^n).$ We further observe that a set
$S_R^{2n-1}\times\mathbb C$ is a set of injectivity for the TSM for a
ceratin class of functions on $\mathbb C^{n+1}.$

\end{abstract}

\maketitle

\section{Introduction}\label{Asection1}

Let $\mu_r$ be the normalized surface measure on sphere $S_r(x).$ Let $\mathscr F
\subseteq L^1_{\loc}(\mathbb R^n).$ We say that $S\subseteq\mathbb R^n$ is a set
of injectivity for the spherical means for $\mathscr F$ if for $f\in\mathscr  F$
with $f\ast\mu_r(x)=0, \forall r>0$ and $\forall x\in S,$ implies $f=0.$

\smallskip

In the work by Agranovsky and Quinto \cite{AQ}, it has been shown that sets of
non-injectivity for the spherical means for $C_c(\mathbb R^n)~(n\geq2)$ is contained
in the zero set of a certain harmonic polynomial. For non zero function
$f\in C_c(\mathbb R^n),$ write $S(f)=\{x\in\mathbb R^n: f\ast\mu_r(x)=0,~\forall r>0\}.$
Then they have proved that $S(f)=\bigcap_{k=0}^\infty Q_k^{-1}(0),$
where \[Q_k(x)=\int_{\mathbb R^n}f(y)|x-y|^{2k}dy.\]
Since all of $Q_k$ can not be identically zero, it follows that there exists the least
positive integer $k_o$ such that $Q_{k_o}\not\equiv0.$ Hence $\Delta Q_{k_o}=2k_o(2k_o+n-1)Q_{k_o-1}=0.$
That is, $S(f)\subseteq Q_{k_{o}}^{-1}(0).$ Since $Q_{k_o}$ is harmonic and
a harmonic polynomial can vanish to a Coxeter system of hyperplane intersecting
along a line, it follows that for $n>2,$ any Coxeter system of hyperplanes
intersecting along a line may fail to be a set of injectivity for the spherical
means on $\mathbb R^n.$ In general, any real cone $K\subset\mathbb R^n~(n>2)$ is
a set of injectivity for the spherical means for $C(\mathbb R^n)$ if and only if $K$
is not contained in the zero set of any homogeneous harmonic polynomial, (see \cite{AVZ}).

\smallskip

However, these results do not continue to hold for injectivity of the
twisted spherical means on $\mathbb C^n~(n\geq2),$ because of non-commutative
nature of underlying geometry of the Heisenberg group, (see \cite{BP, CCG, CDPT, H}).
The question, any odd Coxeter system of hyperplanes can be a set of injectivity for
the twisted spherical mean for $L^p(\mathbb C^n),$ is still unanswered.

\smallskip

Using a recent result of the author (\cite{Sri2}, p.10, Theorem 3.6) that any pair of perpendicular
lines in $\mathbb C$ is a set of injectivity for the TSM, we prove that any pair
of perpendicular planes in $\mathbb C^n~(n\geq2)$ is a set of injectivity for
the TSM for function in $L^p(\mathbb C^n),~1\leq p\leq2.$
Suppose $f\times\mu_r(z)=0,~\forall r>0$ and
$\forall~z\in\mathbb C^{n-1}\times\mathbb R\cup\mathbb C^{n-1}\times i\mathbb R.$
Then $f=0.$  Using this, we conclude that any Coxeter system of even number of planes
intersecting along a line is a set of injectivity for the TSM for $L^q(\mathbb C^n).$

\smallskip

Let $z=(z', z_{n+1})\in\mathbb C^{n+1}$ and $S_R^{2n-1}=\{z\in\mathbb C^n: |z|=R\}.$
In a result of Narayanan and Thangavelu \cite{NT1}, it has been proved that the spheres
centered at the origin are set of injectivity for the TSM on $\mathbb C^n.$ The author
has generalized their result for certain weighted twisted spherical
means, (see \cite{Sri}). In general, the question of set of injectivity for the twisted
spherical means (TSM) with real analytic weight is still open.
In view of result in \cite{NT1} that $S_R^{2n-1}$ is a set of injectivity
for the TSM on $\mathbb C^n,$ we prove that the set $S_R^{2n-1}\times\mathbb C$
is a set of injectivity for the twisted spherical means for the functions satisfying
$e^{\frac{1}{4}|z'|^2}f(z)\in L^p(\mathbb C^{n+1})$ with $1\leq p\leq\infty.$

\smallskip
On account of these results, we observe an embedding property of the sets of injectivity for
the TSM in higher dimensions. In the Euclidean set up, the sets of injectivity for the
spherical means on the unit sphere $S^{n-1}$ can  be embedded into the sets of injectivity
for the spherical means on $\mathbb R^n,$ (see \cite{AVZ}).

\smallskip

Since Laguerre function $\varphi_k^{n-1}$ is an eigenfunction of the special
Hermite operator $A=-\Delta_z+\frac{1}{4}|z|^2,$ with eigenvalue $2k+n,$ the
projection $f\times\varphi_k^{n-1}$ is also an eigenfunction of $A$ with
eigenvalue $2k+n.$ As $A$ is an elliptic operator and eigenfunction of an
elliptic operator is real analytic \cite{N}, the projection $f\times\varphi_k^{n-1}$
must be a real analytic function on $\mathbb C^n.$ By polar decomposition,
the conditions $f\times\mu_r(z)=0, ~\forall r>0$ is
equivalent to $f\times\varphi_k^{n-1}(z)=0, ~\forall k\in\mathbb Z_+,$
set of non-negative integers.
Therefore, any determining set for the real analytic functions is a set
of injectivity for the TSM on $L^p(\mathbb C^n)$ with $1\leq p\leq\infty.$
For example, let $\gamma(t)=r(t)e^{it},$ where $r(t)$ be a non-periodic real
analytic function on $[0,\infty)$ with $\lim_{t\rightarrow\infty}r(t)=0.$ Then
$\mathbb C^{n-1}\times\{\gamma(t):~t\in[0,\infty)\}$ is a set of injectivity
for the TSM for $L^p(\mathbb C^n)$ with $1\leq p\leq\infty.$ For details on
determining sets for real analytic functions, see \cite{PS, RS}.

\bigskip
In an interesting result, Courant and Hilbert (\cite{CH}, p. 699) had proved that if
the circular averages of a function $f$ which is even with respect to a line $L,$ vanishes
over all circles centered at points of $L,$ then $f\equiv0.$
As a consequence of this result, the circular averages of a function $f$ vanish over all circles
centered at points of $L$ if and only if $f$ is odd with respect to $L,$ (see \cite{AQ}, Lemma 6.3).
Hence, any line $L$ in $\mathbb R^2$ is not a set of injectivity for the spherical means for
the odd functions about $L.$

In 1996, Agranovsky and Quinto have completely characterized
the sets of injectivity for the Euclidean spherical means for compactly supported
functions on $\mathbb R^2.$ Their result says that the exceptional set for the sets
of injectivity is a very thin set which consists of a Coxeter system of lines union
finitely many points. Following theorem is their main result.

\begin{theorem}\label{Ath7}\cite{AQ}
A set $S\subset\mathbb R^2$ is a set of injectivity for the spherical means for
$C_c(\mathbb R^2)$ if and only if $S\nsubseteq\omega(\Sigma_N)\cup F,$ where $\omega$
is a rigid motion of $\mathbb R^2, \Sigma_N=\cup_{l=0}^{N-1}\{te^{\frac{i\pi l}{N}}: t\in\mathbb R\}$
is a  Coxeter system of $N$ lines and $F$ is a finite set in $\mathbb R^2.$
\end{theorem}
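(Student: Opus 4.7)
The plan is to split the biconditional into a constructive direction (if $S \subseteq \omega(\Sigma_N) \cup F$ for some data $\omega, N, F$, then $S$ is not a set of injectivity) and a structural direction (if $S$ is not a set of injectivity, then $S$ is contained in such a union). The structural direction carries the substance.

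For the structural direction, I would start with a nonzero $f \in C_c(\mathbb{R}^2)$ whose spherical means vanish on $S$ and reuse the moment-polynomial machinery already reviewed in the introduction: the polynomials $Q_k(x) = \int f(y)\,|x-y|^{2k}\,dy$ all vanish on $S$, and the smallest $k_0$ with $Q_{k_0} \not\equiv 0$ produces a nontrivial harmonic polynomial on $\mathbb{R}^2$ (since $\Delta Q_{k_0} = 4 k_0^2 Q_{k_0-1} = 0$ in dimension two), so that $S \subseteq Q_{k_0}^{-1}(0)$. Writing $Q_{k_0}$ in the complex variable $z = x + iy$ as the real part of a holomorphic polynomial $P(z)$, the top-degree part is homogeneous harmonic of some degree $N$, whose zero set is exactly a Coxeter system $\Sigma_N$ of $N$ equi-angular lines through the origin. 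A rescaling comparison of $Q_{k_0}^{-1}(0)$ with this homogeneous leading part shows that the lower-order terms of $Q_{k_0}$ only translate the system by a bounded vector (producing the rigid motion $\omega$) and leave behind a bounded algebraic remainder, which the compact support of $f$ together with the real-analyticity of $x \mapsto f * \mu_r(x)$ (inherited from $f \in C_c$) force to be a \emph{finite} set of isolated points, giving $S \subseteq \omega(\Sigma_N) \cup F$.

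For the constructive direction, after a rigid motion I may assume $\omega = \mathrm{id}$. The model building block is $f_0(z) = \phi(|z|)\,\mathrm{Im}(z^N)$ for any nonzero $\phi \in C_c((0,\infty))$: since the reflection $\sigma_l(z) = e^{2\pi i l/N}\bar z$ across the $l$-th line of $\Sigma_N$ satisfies $\sigma_l(z)^N = \bar z^N$, one has $\mathrm{Im}(\sigma_l(z)^N) = -\mathrm{Im}(z^N)$ and hence $f_0 \circ \sigma_l = -f_0$; combined with the rotational invariance of $\mu_r$ this forces $(f_0 * \mu_r)(x) = 0$ for every $x \in \Sigma_N$ and every $r > 0$. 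Thus $\Sigma_N$ itself is a set of non-injectivity, and because non-injectivity is downward closed under set inclusion, it suffices to show that $\Sigma_N \cup F$ is also non-injectivity for any finite $F$. This final absorption is carried out inside the infinite-dimensional family $V = \{\phi(|z|)\,\mathrm{Im}(z^N) : \phi \in C_c((0,\infty))\}$ of $\Sigma_N$-killers: each point $p \in F$ imposes a single real-analytic constraint in $r$ on $(f * \mu_r)(p)$, and a transversality/dimension argument exploiting that $V$ contains functions supported in arbitrarily narrow annuli produces a nontrivial $f$ meeting all of these constraints simultaneously.

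The main obstacle is clearly the reduction in the structural direction from ``one-dimensional harmonic curve $Q_{k_0}^{-1}(0)$'' to ``Coxeter system plus finite set of points.'' The moment polynomial argument by itself yields only the former, and a priori a nonzero $f \in C_c(\mathbb{R}^2)$ might have its spherical means vanishing along a genuinely curved branch of a harmonic variety (consider, for instance, how the zero set of $x^2 - y^2 - 1$ is a hyperbola and not a union of lines). Cutting this excess one-dimensional noise down to a finite residue requires exploiting the full intersection $\bigcap_k Q_k^{-1}(0)$ of \emph{all} the moment polynomials simultaneously, and invoking microlocal analysis of the spherical Radon transform together with the Paley--Wiener / compact-support properties of $f$; this is where the original Agranovsky--Quinto argument deploys its deepest machinery.
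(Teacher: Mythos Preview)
The paper does not contain a proof of this statement. Theorem~\ref{Ath7} is quoted verbatim from Agranovsky and Quinto \cite{AQ} as background for the author's own results on twisted spherical means; the only surrounding text is the sentence ``Following theorem is their main result,'' and the paper then moves on to the higher-dimensional conjecture and to the Heisenberg setting without supplying any argument for Theorem~\ref{Ath7} itself.

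So there is nothing to compare your proposal against in this paper. For what it is worth, your sketch is broadly faithful to the architecture of the original Agranovsky--Quinto proof: the moment-polynomial reduction to a harmonic zero set is exactly what the introduction summarizes, and you correctly identify the hard step as passing from ``contained in the zero set of one harmonic polynomial'' to ``contained in a Coxeter system up to finitely many points,'' which in \cite{AQ} is handled by a stationary-phase/microlocal argument on the circular Radon transform combined with the compact support of $f$. Your constructive direction via odd functions $\phi(|z|)\,\mathrm{Im}(z^N)$ is also the standard example. The one place where your sketch is somewhat optimistic is the claim that the lower-order terms of $Q_{k_0}$ ``only translate the system by a bounded vector''; in general the zero set of a harmonic polynomial on $\mathbb{R}^2$ need not be a rigid image of its leading homogeneous part (your own hyperbola example shows this), so that sentence as written is not correct---the actual mechanism is the full intersection $\bigcap_k Q_k^{-1}(0)$ together with the microlocal input, exactly as you say in your final paragraph.
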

In particular, any closed curve is a set of injectivity for $C_c(\mathbb R^2).$
In fact, Agranovsky et al. \cite{ABK} further prove that the boundary of any bounded domain
in $\mathbb R^n~(n\geq2)$ is set of injectivity for the spherical means on
$L^p(\mathbb R^n),$ with $1\leq p\leq\frac{2n}{n-1}.$ For $p>\frac{2n}{n-1},$
unit sphere $S^{n-1}$ is an example of non-injectivity set in $\mathbb R^n.$
This result has been generalized for certain weighted spherical means, (see \cite{NRR}).
In general, the question of set of injectivity for the spherical means with real analytic
weight is still open. In \cite{NRR}, it has been shown that $S^{n-1}$ is a set of
injectivity for the spherical means with real analytic weight for the class of radial
functions on $\mathbb R^n.$

\smallskip

An analogue of Theorem \ref{Ath7} in the higher dimensions is still open and appeared as a
conjecture in their work \cite{AQ}. It says that the sets of non-injectivity for the Euclidean
spherical means are contained in a certain algebraic variety. Following is their question.

\smallskip

\noindent{\bf Conjecture \cite{AQ}.}
A set $S\subset\mathbb R^n$ is a set of injectivity for the spherical means for $C_c(\mathbb R^2)$
if and only if $S\nsubseteq\omega(\Sigma)\cup F,$ where $\omega$ is a rigid motion of $\mathbb R^n,$
$\Sigma$ is the zero set of a homogeneous harmonic polynomial and $F$ is an algebraic variety in
$\mathbb R^n$ of co-dimension at most $2.$

\smallskip

This conjecture remains unsolved, however a partial result related to this conjecture has
been proved by Kuchment et al. \cite{AK}. They also present a survey on the recent developments
towards the above conjecture. However, in this article, we observe that this conjecture does
not continue to hold for the spherical means on the Heisenberg group $\mathbb H^n=\mathbb C^n\times\mathbb R.$
In fact result on $\mathbb H^1$ (see \cite{Sri2}) is an adverse to the Euclidean result,
Theorem \ref{Ath7} on $\mathbb R^2.$

\smallskip

In more general, let $f$ be a non-zero function in $L^2(\mathbb C^n)$ and write
$S(f)=\{z\in\mathbb C^n: f\times\mu_r(z)=0, \forall r>0\}.$ Our main problem is
to describe completely the geometrical structure of $S(f)$ that would characterize
which ``sets" are set of injectivity for the TSM. For a type function
$f(z)=\tilde{a}(|z|)P(z)\in L^2(\mathbb C^n)\cap C(\mathbb C^n),$ where $P\in H_{p,q}$
is the space of bigraded homogeneous harmonic polynomials on $\mathbb C^n.$ In the
article \cite{Sri2},it has been shown that $S(f)=P^{-1}(0)\cup F,$ where $F$ is the
union of finitely many spheres centered at the origin. Since $P$ is harmonic, by maximal
principle $P^{-1}(0)$ can not contain the boundary of any bounded domain in $\mathbb C^n.$
Hence the boundary of any bounded domain would be a possible candidate for set of
injectivity for the TSM, (see \cite{AR, NT1, ST}).

\section{Notation and Preliminaries}\label{Asection2}
We define the twisted spherical means which arise in the study of spherical means
on Heisenberg group. The group $\mathbb H^n,$ as a manifold, is
$\mathbb C^n \times\mathbb R$ with the group law
\[(z, t)(w, s)=(z+w,t+s+\frac{1}{2}\text{Im}(z.\bar{w})),~z,w\in\mathbb C^n\text{ and }t,s\in\mathbb R.\]
Let $\mu_s$ be the normalized surface measure on the sphere $ \{ (z,0):~|z|=s\} \subset \mathbb H^n.$
The spherical means of a function $f$ in $L^1(\mathbb H^n)$ are defined by
\begin{equation} \label{Aexp22}
f\ast\mu_s(z, t)=\int_{|w|=s}~f((z,t)(-w,0))~d\mu_s(w).
\end{equation}
Thus the spherical means can be thought of as convolution operators. An important
technique in many problem on $\mathbb H^n$ is to take partial Fourier transform
in the $t$-variable to reduce matters to $\mathbb C^n$. Let
\[f^\lambda(z)=\int_\mathbb R f(z,t)e^{i \lambda t} dt\]
be the inverse Fourier transform of $f$ in the $t$-variable. Then a simple calculation
shows that
\begin{eqnarray*}
(f \ast \mu_s)^\lambda(z)&=&\int_{-\infty}^{~\infty}~f \ast \mu_s(z,t)e^{i\lambda t} dt\\
&=&\int_{|w| = s}~f^\lambda (z-w)e^{\frac{i\lambda}{2} \text{Im}(z.\bar{w})}~d\mu_s(w)\\
&=&f^\lambda\times_\lambda\mu_s(z),
\end{eqnarray*}
where $\mu_s$ is now being thought of as normalized surface measure
on the sphere $S_s(o)=\{z\in\mathbb C^n: |z|=s\}$ in $\mathbb C^n.$
Thus the spherical mean $f\ast \mu_s$ on the Heisenberg group can be
studied using the $\lambda$-twisted spherical mean $f^\lambda
\times_\lambda\mu_s$ on $\mathbb C^n.$ For $\lambda \neq 0,$
a further scaling argument shows that it is enough to study these
means for the case of $\lambda=1.$

Let $\mathscr F\subseteq L^1_{\loc}(\mathbb C^n).$  We say $S\subseteq\mathbb C^n$
is a set of injectivity for twisted spherical means for $\mathscr F$ if for
$f\in\mathscr F$ with $f\times\mu_r(z)=0, \forall r>0$ and $\forall z\in S,$
implies $f=0$ a.e.  The results on set of injectivity differ in the choice
of sets and the class of functions considered. We would like to refer to
\cite{AR, NT1, Sri}, for some results on the sets of injectivity for the TSM.

\bigskip

We need the following basic facts from the theory of bigraded
spherical harmonics (see \cite{T}, p.62 for details). We shall use
the notation $K=U(n)$ and $M=U(n-1).$ Then, $S^{2n-1}\cong K/M$ under
the map $kM\rightarrow k.e_n,$ $k\in U(n)$ and $e_n=(0,\ldots
,1)\in \mathbb C^n.$ Let $\hat{K}_M$ denote the set of all
equivalence classes of irreducible unitary representations of $K$
which have a nonzero $M$-fixed vector. It is known that each
representation in $\hat{K}_M$ has a unique nonzero $M$-fixed vector,
up to a scalar multiple.

For a $\delta\in\hat{K}_M,$ which is realized on $V_{\delta},$ let
$\{e_1,\ldots, e_{d(\delta)}\}$ be an orthonormal basis of
$V_{\delta}$ with $e_1$ as the $M$-fixed vector. Let
$t_{ij}^{\delta}(k)=\langle e_i,\delta (k)e_j \rangle ,$ $k\in K$
and $\langle , \rangle$ stand for the innerproduct on $V_{\delta}.$
By Peter-Weyl theorem, it follows that $\{\sqrt{d(\delta
)}t_{j1}^{\delta}:1\leq j\leq d(\delta ),\delta\in\hat{K}_M\}$ is an
orthonormal basis of $L^2(K/M)$ (see \cite{T}, p.14 for details).
Define $Y_j^{\delta} (\omega )=\sqrt{d(\delta )}t_{j1}^{\delta}(k),$
where $\omega =k.e_n\in S^{2n-1},$ $k \in K.$ It then follows that
$\{Y_j^{\delta}:1\leq j\leq d(\delta ),\delta\in \hat{K}_M, \}$
forms an orthonormal basis for $L^2(S^{2n-1}).$

For our purpose, we need a concrete realization of the representations in
$\hat{K}_M,$ which can be done in the following way. See \cite{Ru}, p.253,
for details. For $p,q\in\mathbb Z_+$, let $P_{p,q}$ denote the space of all
bigraded polynomials $P$ in $z$ and $\bar{z}$ of the form
\[P(z)=\sum_{|\alpha|=p}\sum_{|\beta|=q}c_{\alpha\beta} z^\alpha\bar{z}^\beta.\]
Let $H_{p,q}=\{P\in P_{p,q}:\Delta P=0\}.$ The elements of $H_{p,q}$ are
called the bigraded solid harmonics on $\mathbb C^n.$ The group  $K$
acts on $H_{p,q}$ in a natural way. It is easy to see that the space
$H_{p,q}$ is $K$-invariant. Let $\pi_{p,q}$ denote the corresponding
representation of $K$ on $H_{p,q}.$ Then representations in
$\hat{K}_M$ can be identified, up to unitary equivalence, with the
collection $\{\pi_{p,q}: p,q \in \mathbb Z_+\}.$

Define the bigraded spherical harmonic by $Y_j^{p,q}(\omega)=\sqrt{d(p,q )}t_{j1}^{p,q}(k).$
Then $\{Y_j^{p,q}:1\leq j\leq d(p,q),p,q \in \mathbb Z_+ \}$ forms an orthonormal basis for
$L^2(S^{2n-1}).$ Therefore, for a continuous function $f$ on $\mathbb C^n,$ writing
$z=\rho \,\omega,$ where $\rho>0$ and $\omega \in S^{2n-1},$ we can expand the function $f$
in terms of spherical harmonics as
\begin{equation}\label{ABexp4}
f(\rho\omega)=\sum_{p,q\geq0}\sum_{j=1}^{d(p,q)}a_j^{p,q}(\rho)Y_j^{p,q}(\omega),
\end{equation}
where the series on the right-hand side converges uniformly on every compact set
$K\subseteq\mathbb C^n.$ The functions $a_j^{p,q}$ are called the spherical
harmonic coefficients of $f$ and function $a^{p,q}(\rho)Y^{p,q}(\omega)$ is
known as the type function.

\smallskip
We also need an expansion of functions on $\mathbb C^n$ in terms of Laguerre
functions $\varphi_k^{n-1}$'s, which is know as special Hermite expansion.
The special Hermite expansion is a useful tool in the study of convolution
operators and is related to the spectral theory of sub-Laplacian on the
Heisenberg group $H^n$. However, more details can be found in \cite{T}.

\smallskip

For $\lambda\in\mathbb R^*=\mathbb R\setminus\{0\}$, let
$\pi_\lambda$ be the unitary representation of $H^n$ on $L^2(\mathbb
R^n)$ given by
$$\pi_\lambda(z,t)\varphi(\xi)=e^{i\lambda t}e^{i\lambda(x.\xi+\frac{1}{2}x.y)}\varphi(\xi+y),
\varphi\in L^2(\mathbb R^n).$$ A celebrated theorem of Stone and von Neumann
says that $\pi_\lambda$ is  irreducible and up to unitary equivalence
$\{\pi_\lambda:~\lambda\in\mathbb R\}$ are all the infinite dimensional unitary
irreducible representations of $H^n.$
Let \[T=\dfrac{\partial}{\partial t},X_j=\dfrac{\partial}{\partial
x_j}+\dfrac{1}{2}y_j\dfrac{\partial}{\partial t}~,
Y_j=\dfrac{\partial}{\partial
y_j}-\dfrac{1}{2}x_j\dfrac{\partial}{\partial t},~j=1,2,\ldots,n.\]
Then  $\{T,X_j,Y_j:~j=1,\ldots,n\}$ is a  basis for the  Lie Algebra
$\mathfrak h^n$ of all left invariant vector fields on $H^n.$
Define $\mathcal L=-\sum_{j=1}^n(X_j^2+Y_j^2),$ the second order
differential operator which is known as the sub-Laplacian of $H^n.$
The representation $\pi_\lambda$ induces a representation
$\pi_\lambda^*$ of $\mathfrak h^n,$ on the space of $C^\infty$
vectors in $L^2(\mathbb R^n)$ is defined by
\[\pi_\lambda^*(X)f=\left.\frac{d}{dt}\right\vert_{t=0}\pi_\lambda(\exp tX)f.\]
An easy calculation shows that
$\pi^*(X_j)=i\lambda x_j,~\pi^*(Y_j)=\dfrac{\partial}{\partial x_j},~j=1,2,\ldots,n$.
Therefore, $\pi_\lambda^*(\mathcal
L)=-\Delta_x+\lambda^2|x|^2=:H(\lambda),$ the scaled Hermite
operator. The eigenfunction of $H(\lambda)$ are given by
$\phi_\alpha^\lambda(x)=|\lambda|^{\frac{n}{4}}\phi_\alpha(\sqrt{|\lambda|} x), ~\alpha\in\mathbb Z_+^n,$
where $\phi_\alpha$ are the Hermite functions on $\mathbb R^n.$
Since
$H(\lambda)\phi_\alpha^\lambda=(2|\lambda|+n)|\lambda|\phi_\alpha^\lambda.$
Therefore,
\[\mathcal L\left(\pi_\lambda(z,t)\phi_\alpha^\lambda,\phi_\beta^\lambda\right)
=(2|\lambda|+n)|\lambda|\left(\pi_\lambda(z,t)\phi_\alpha^\lambda,\phi_\beta^\lambda\right).\]
Thus the entry functions
$\left(\pi_\lambda(z,t)\phi_\alpha^\lambda,\phi_\beta^\lambda\right),~
 \alpha,\beta\in\mathbb Z_+^n$ are eigenfunctions for $\mathcal L$.
As
$\left(\pi_\lambda(z,t)\phi_\alpha^\lambda,\phi_\beta^\lambda\right)
 =e^{i\lambda t}\left(\pi_\lambda(z)\phi_\alpha^\lambda,\phi_\beta^\lambda\right),$
these eigenfunctions are not in $L^2(H^n).$ However for a fix $t,$
they are in $L^2(\mathbb C^n).$ Define  $\mathcal L_\lambda$ by
$\mathcal L\left(e^{i\lambda t}f(z)\right)=e^{i\lambda t}L_\lambda
f(z).$ Then the functions
\[\phi_{\alpha\beta}^\lambda(z)=
(2\pi)^{-\frac{n}{2}}\left(\pi_\lambda(z)\phi_\alpha^\lambda,\phi_\beta^\lambda\right),\]
are eigenfunction of the operator $L_\lambda$ with eigenvalue $2|\lambda|+n.$
The functions $\phi^\lambda_{\alpha\beta}$'s are called the special Hermite
functions and they form an orthonormal basis
for $L^2(\mathbb C^n)$ (see \cite{T}, Theorem 2.3.1, p.54). Thus,
for $g\in L^2(\mathbb C^n)$, we have the expansion
\[g=\sum_{\alpha,\beta}\left\langle g,\phi^\lambda_{\alpha\beta}\right\rangle\phi^\lambda_{\alpha\beta}.\]
To further simplify this expansion, let
$\varphi^{n-1}_{k,\lambda}(z)=\varphi^{n-1}_k(\sqrt{|{\lambda|}}z),$
the Leguerre function of degree $k$ and order $n-1$. The special
Hermite functions $\phi^\lambda_{\alpha\alpha}$ satisfy the relation
\begin{equation}\label{ACexp4}
\sum_{|\alpha|=k}\phi^\lambda_{\alpha,\alpha}(z)=(2\pi)^{-\frac{n}{2}}
|\lambda|^{\frac{n}{2}}\varphi^{n-1}_{k,\lambda}(z).
\end{equation}
Let $g$ be a function in $L^2(\mathbb C^n)$. Then $g$ can be expressed as
\[g(z)=(2\pi)^{-n}|\lambda|^n\sum_{k=0}^\infty g\times_\lambda\varphi_{k,\lambda}^{n-1}(z),\]
whenever $\lambda\in\mathbb R^*,$ (see \cite{T}, p.58). In particular, for $\lambda=1$, we have
\begin{equation}\label{ACexp16}
g(z)=(2\pi)^{-n}\sum_{k=0}^\infty g\times\varphi_k^{n-1}(z),
\end{equation}
which is called the special Hermite expansion for $g$.

\section{Sets of injectivity for the twisted spherical means}\label{Asection3}
In this section, we prove that any Coxeter system of even number of hyperplanes
intersecting along a line is a set of injectivity for the TSM for
$L^p(\mathbb C^n)~(n\geq2),$ for $1\leq p\leq2.$ Then, we prove that the set
$S_R^{2n-1}\times\mathbb C$ is a set of injectivity for the TSM for a ceratin
class of functions on $\mathbb C^{n+1}.$ In the first case, we deduce a density
result for $L^p(\mathbb C^n)$ with $2\leq p<\infty.$

\smallskip

In order to prove our main results, we need the following results from \cite{NT1, Sri2}.
\begin{theorem} \label{Ath3}\cite{NT1}
Let $f$ be a function on $\mathbb C^n$ such that $e^{\frac{1}{4}|z|^2}f(z)\in
L^p(\mathbb C^n),$ for $1\leq p\leq \infty.$ Suppose $f\times\mu_r(z)=0, \forall~r>0$
and $\forall~z\in\ S_R^{2n-1}.$ Then $f=0$ a.e. on $\mathbb C^n.$
\end{theorem}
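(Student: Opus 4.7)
The plan is to reduce the vanishing of $f\times\mu_r$ on the single centered sphere $S_R^{2n-1}$ to a sequence of Laguerre moment conditions on each bigraded spherical harmonic coefficient of $f$, and then invoke a weighted completeness of Laguerre systems. First I would use the expansion \eqref{ABexp4} to write
$$f(\rho\omega)=\sum_{p,q\geq 0}\sum_{j=1}^{d(p,q)}a_j^{p,q}(\rho)\,Y_j^{p,q}(\omega).$$
Exploiting the $K=U(n)$-equivariance of twisted convolution with the $K$-invariant measure $\mu_r$, together with the Peter--Weyl orthogonality of the distinct matrix coefficients $t_{j1}^{p,q}$, I would deduce that the hypothesis $f\times\mu_r\equiv 0$ on $S_R^{2n-1}$ is inherited by every individual type function $f_j^{p,q}(z)=a_j^{p,q}(|z|)\,Y_j^{p,q}(z/|z|)$ separately.

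Next, by the polar-decomposition equivalence noted in the introduction, the hypothesis $f_j^{p,q}\times\mu_r(R\omega)=0$ for every $r>0$ is the same as
$$f_j^{p,q}\times\varphi_k^{n-1}(R\omega)=0\qquad\text{for every }k\in\mathbb Z_+,\ \omega\in S^{2n-1}.$$
The Hecke--Bochner identity for twisted convolution (see \cite{T}) then evaluates this projection in closed form: the twisted convolution of a type function of bidegree $(p,q)$ with $\varphi_k^{n-1}$ is again a type function of the same bidegree, whose radial profile is an explicit weighted Laguerre-type integral transform of $a_j^{p,q}$ against $L_{k-p}^{n+p+q-1}(\tfrac{1}{2}\rho^2)e^{-\rho^2/4}$ (and vanishes when $k<p$). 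Evaluating at $|z|=R$ and letting $k$ range over $\mathbb Z_+$ converts the hypothesis into an infinite sequence of Laguerre moments of the scalar profile $a_j^{p,q}$ that all vanish at the single fixed radial node $\rho=R$.

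Finally, I would force $a_j^{p,q}\equiv 0$ by a completeness argument. This is where the weighted hypothesis $e^{\frac{1}{4}|z|^2}f\in L^p(\mathbb C^n)$ becomes essential: it places the appropriately normalized radial profile into a Banach space in which the family $\{L_m^{n+p+q-1}(\tfrac{1}{2}\rho^2)\}_{m\ge 0}$, after absorbing the Gaussian $e^{-\rho^2/4}$, is a complete system on $(0,\infty)$. Consequently the vanishing of all Laguerre moments forces $a_j^{p,q}=0$, and summing over $(p,q,j)$ gives $f=0$ almost everywhere. The main obstacle I anticipate is precisely this last step: outside the Hilbert range $p=2$, completeness of the Gaussian-weighted Laguerre polynomials in the relevant $L^p$-space is delicate, and it must be established in a form that is uniform across all bidegrees $(p,q)$ so that the termwise vanishing can be assembled back into the vanishing of $f$ itself. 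Granting this uniform completeness, the theorem follows.
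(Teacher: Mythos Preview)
The paper does not prove this theorem at all; it is quoted verbatim from Narayanan--Thangavelu \cite{NT1} and used as a black box in the proof of Theorem~\ref{Ath4}. So there is no ``paper's own proof'' to compare against. That said, your outline is essentially the strategy of \cite{NT1}: decompose into bigraded type functions via $K=U(n)$-equivariance, apply the Hecke--Bochner identity to reduce each spectral projection to a single Laguerre coefficient times $\varphi_{k-p}^{\,n+p+q-1}(|z|)\,Y_j^{p,q}(\omega)$, and then kill the radial profiles by a weighted Laguerre completeness argument that is enabled precisely by the hypothesis $e^{\frac14|z|^2}f\in L^p$.

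There is, however, one genuine gap in your write-up. After Hecke--Bochner you do not obtain ``Laguerre moments that vanish at the single fixed radial node $\rho=R$''; you obtain, for each $k\ge p$, an equation of the form
\[
c_k^{\,p,q,j}\;\varphi_{k-p}^{\,n+p+q-1}(R)\;=\;0,
\]
where $c_k^{\,p,q,j}$ is the Laguerre coefficient of the radial profile $a_j^{p,q}$. This forces $c_k^{\,p,q,j}=0$ only when $\varphi_{k-p}^{\,n+p+q-1}(R)\neq 0$, and for a given $R$ and fixed order $n+p+q-1$ there may be finitely many indices $k$ for which this factor does vanish. The argument in \cite{NT1} handles this: one first gets $c_k^{\,p,q,j}=0$ for all but finitely many $k$, and then the growth hypothesis $e^{\frac14|z|^2}f\in L^p$ (equivalently, suitable decay of the coefficients) is used to conclude that the remaining finitely many coefficients must also vanish. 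Your sketch collapses these two steps into one and thereby hides the only place where the choice of a \emph{single} sphere (as opposed to all spheres) creates a difficulty. Once you insert this missing step, your proposal matches the original proof.
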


\begin{remark}\label{Ark3}
For $\eta\in\mathbb C^n,$ define the left twisted translate by
\[\tau_\eta f(\xi)=f(\xi-\eta)e^{\frac{i}{2}\text{Im}(\eta.\bar\xi)}.\]
Then $\tau_{\eta}(f\times\mu_r)=\tau_{\eta}f\times\mu_r.$ Since the
function space considered as in the above Theorem \ref{Ath3} is not twisted
translation invariant, it follows that a sphere centered off the origin is not
set of injectivity for the TSM on $\mathbb C^n.$ The author has generalized
Theoren \ref{Ath3} for certain weighted twisted spherical means, (see \cite{Sri}).
\end{remark}

As the space $L^p(\mathbb C^n)$ is twisted translations invariant,
to prove any Coxeter system of even number of hyperplanes intersecting along a
line is a set of injectivity for the TSM for $L^p(\mathbb C^n),$ it is enough
to prove that the set $\mathbb C^{n-1}\times\Sigma_{2N},$ where
$\cup_{l=0}^{2N-1}\{te^{\frac{i\pi l}{2N}}: t\in\mathbb R\},$
is a set of injectivity for the TSM for $L^p(\mathbb C^n).$

 \begin{theorem}\label{Ath1}\cite{Sri2}
Let $f\in L^p(\mathbb C),$ for $1\leq p\leq2.$ Suppose $f\times\mu_r(z)=0, \forall~r>0$
and $\forall~z\in\Sigma_{2N}.$  Then $f=0$ a.e. on $\mathbb C.$
\end{theorem}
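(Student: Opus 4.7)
The plan is to observe that Theorem \ref{Ath1} is a direct consequence of the perpendicular-lines injectivity result cited as Theorem 3.6 of \cite{Sri2}, via the elementary set-theoretic containment $\Sigma_2 \subseteq \Sigma_{2N}$.

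First I would verify this containment. By definition, $\Sigma_{2N}=\cup_{l=0}^{2N-1}\{te^{i\pi l/(2N)}: t\in\mathbb R\}$. The index $l=0$ produces the real axis, while the index $l=N$ (which lies in $\{0,1,\ldots,2N-1\}$ since $N\geq 1$) produces $\{te^{i\pi/2}: t\in\mathbb R\}$, the imaginary axis. These two lines are perpendicular and together constitute $\Sigma_2$, so the pair of perpendicular axes through the origin is always contained in $\Sigma_{2N}$.

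Given this, the proof is immediate. If $f\in L^p(\mathbb C)$ satisfies $f\times\mu_r(z)=0$ for every $r>0$ and every $z\in\Sigma_{2N}$, then in particular this vanishing holds on the sub-configuration $\Sigma_2\subseteq\Sigma_{2N}$. By the cited Theorem 3.6 of \cite{Sri2}, a pair of perpendicular lines in $\mathbb C$ is a set of injectivity for the TSM on $L^p(\mathbb C)$ for $1\leq p\leq 2$, so one concludes that $f=0$ a.e.

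The only genuine step is recognizing the inclusion $\Sigma_2\subseteq\Sigma_{2N}$; the rest is formal. A from-scratch proof that avoids the perpendicular-lines theorem would be substantially more involved: one would expand $f$ into its angular Fourier modes $a_m(\rho)e^{im\theta}$, use the Hecke--Bochner identity to observe that twisted convolution with each Laguerre function $\varphi_k^0$ preserves these modes, apply discrete Fourier analysis at the $(4N)$-th roots of unity (the directions of the $4N$ rays making up $\Sigma_{2N}$) to extract congruence-class identities among the mode coefficients, and then use Laguerre recurrences together with the $L^p$ hypothesis to decouple the modes and deduce $a_m\equiv 0$ for every $m$. This is essentially the machinery behind the perpendicular-lines case in \cite{Sri2}, and it would equally well absorb the $2N$-line situation; but in the present formulation the containment argument bypasses the whole computation.
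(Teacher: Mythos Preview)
Your argument is correct. The paper does not supply its own proof of Theorem~\ref{Ath1}; the statement is quoted from \cite{Sri2} and used as a black box in the proof of Theorem~\ref{Ath2}. Your reduction---checking that the indices $l=0$ and $l=N$ in $\Sigma_{2N}$ produce the real and imaginary axes, so that $\Sigma_2\subseteq\Sigma_{2N}$, and then invoking the perpendicular-lines injectivity result (Theorem~3.6 of \cite{Sri2})---is valid, and it is exactly the mechanism the paper itself alludes to in the introduction when it says ``Using this, we conclude that any Coxeter system of even number of planes\ldots''.
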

Using Theorem \ref{Ath1}, we prove the following result. Let $S_n=\mathbb C^{n-1}\times\Sigma_{2N}.$
\begin{theorem}\label{Ath2}
Let $f\in L^p(\mathbb C^n),$ for $1\leq p\leq2.$ Suppose $f\times\mu_r(z)=0,~\forall r>0$
and $\forall~z\in S_n.$ Then $f=0$ a.e. on $\mathbb C^n.$
\end{theorem}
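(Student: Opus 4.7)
The plan is to reduce to the one-dimensional Theorem \ref{Ath1} by separating variables through the Laguerre projections associated to the factorization $\mathbb C^n=\mathbb C^{n-1}\times\mathbb C$. By the standard Laguerre expansion $\mu_r=\sum_k c_k(r)\varphi_k^{n-1}$ of the surface measure on $\mathbb C^n$, which is invertible in $r>0$, the hypothesis is equivalent to $f\times\varphi_k^{n-1}(z',z_n)=0$ for all $(z',z_n)\in S_n$ and all $k\geq0$. The product structure $\phi_{(\alpha',\alpha_n),(\alpha',\alpha_n)}=\phi_{\alpha',\alpha'}\otimes\phi_{\alpha_n,\alpha_n}$ of the special Hermite functions combined with identity (\ref{ACexp4}) applied in each factor yields
\[
\varphi_k^{n-1}(z',z_n)=\sum_{j+l=k}\varphi_j^{n-2}(z')\,\varphi_l^0(z_n),
\]
so that $f\times\varphi_k^{n-1}=\sum_{j+l=k}G_{j,l}$, where $G_{j,l}(z',z_n):=\left[(f\times_{z'}\varphi_j^{n-2})\times_{z_n}\varphi_l^0\right](z',z_n)$ is a composition of partial twisted convolutions in the two disjoint groups of variables, and these partial convolutions commute.

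The crux is to separate this finite sum by exploiting the joint eigenspace structure. Convolution by $\varphi_j^{n-2}$ is the projection onto the $(2j+n-1)$-eigenspace of the partial special Hermite operator $A'=-\Delta_{z'}+\frac{1}{4}|z'|^2$, so $A'G_{j,l}=(2j+n-1)G_{j,l}$, and each $G_{j,l}$ is real-analytic as an eigenfunction of an elliptic operator. For each fixed $z_n\in\Sigma_{2N}$ the identity $\sum_{j=0}^{k}G_{j,k-j}(z',z_n)=0$ holds for every $z'\in\mathbb C^{n-1}$, so applying $(A')^m$ in $z'$ for $m=0,1,\ldots,k$ produces a linear system whose coefficient matrix $\left((2j+n-1)^m\right)_{0\le j,m\le k}$ is Vandermonde and invertible. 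This forces $G_{j,l}(z',z_n)=0$ on $S_n$ for every pair $(j,l)$ with $j+l=k$; varying $k$, $G_{j,l}\equiv 0$ on $S_n$ for all $j,l\geq 0$.

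Next, fix $z'\in\mathbb C^{n-1}$ and $j\geq 0$, and set $h_{z',j}(z_n):=(f\times_{z'}\varphi_j^{n-2})(z',z_n)$, so that $G_{j,l}(z',z_n)=(h_{z',j}\times_1\varphi_l^0)(z_n)$. The preceding step gives $(h_{z',j}\times_1\varphi_l^0)(z_n)=0$ for $z_n\in\Sigma_{2N}$ and every $l\geq 0$; re-assembling via the one-dimensional Laguerre expansion $\mu_r=\sum_l c_l(r)\varphi_l^0$ on $\mathbb C$, this is equivalent to $h_{z',j}\times_1\mu_r(z_n)=0$ for $z_n\in\Sigma_{2N}$ and all $r>0$. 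Minkowski's inequality together with the Schwartz decay of $\varphi_j^{n-2}$ shows $h_{z',j}\in L^p(\mathbb C)$ for almost every $z'$, so Theorem \ref{Ath1} yields $h_{z',j}\equiv 0$. Hence $(f\times_{z'}\varphi_j^{n-2})(z',z_n)=0$ for a.e.\ $z'$, every $z_n$, and every $j$; completeness of the special Hermite expansion (\ref{ACexp16}) on $\mathbb C^{n-1}$ then forces $f(\cdot,z_n)=0$ a.e., giving $f=0$ a.e.\ on $\mathbb C^n$.

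The main obstacle I expect is the Vandermonde separation step: it requires both the smoothness of each $G_{j,l}$ (supplied by elliptic regularity) and, crucially, that the $z'$-slice of $S_n$ is all of $\mathbb C^{n-1}$, so that $(A')^m$ can be iterated pointwise on an honest identity rather than on a set of positive codimension. Secondary care is needed to verify that the two partial twisted convolutions genuinely commute on $L^p(\mathbb C^n)$ for $1\le p\le 2$ and to handle the Laguerre inversions in $r$ used to pass between $\mu_r$ and the projections $\varphi_k^{n-1}$ and $\varphi_l^0$ at each stage.
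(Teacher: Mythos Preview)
Your proof is correct and follows essentially the same strategy as the paper: use the product decomposition $\varphi_k^{n-1}(z',z_n)=\sum_{j+l=k}\varphi_j^{n-2}(z')\varphi_l^0(z_n)$, exploit that the $z'$–slice of $S_n$ is all of $\mathbb C^{n-1}$ to separate the diagonal sum, and then invoke Theorem~\ref{Ath1} in the last variable. The only differences are in execution: the paper first passes to $L^2$ by convolving with a radial approximate identity and then separates the sum via the $L^2$–orthogonality of the Laguerre projections, whereas you separate via the Vandermonde/eigenvalue argument with $A'$ (relying on elliptic regularity); and the paper peels off one complex coordinate at a time by induction on $n$, whereas you split $\mathbb C^n=\mathbb C^{n-1}\times\mathbb C$ in one step. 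These are interchangeable devices for the same core step, and neither approach gains a real advantage over the other.
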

\begin{proof}
Since $f\times\mu_r(z)=0, ~\forall r>0,$  by polar decomposition, it follows that
$f\times\varphi_k^{n-1}(z)=0, ~\forall k\in\mathbb Z_+.$ Given that $f\in L^p(\mathbb C^n).$
By convolving $f$ with a right and radial compactly supported smooth approximate identity,
we can assume $f\in L^2(\mathbb C^n).$ In order to prove the result on $\mathbb C^n,$ We first
prove the result on $\mathbb C^2$ and then by induction hypothesis on $n,$ we deduce it for
$\mathbb C^n.$ Since
\[\varphi_k^1(z_1,z_2)=\sum_{\beta_1+\beta_2=k}\varphi_{\beta_1}^0(z_1)\varphi_{\beta_2}^0(z_2).\]
Therefore, we can write
\begin{eqnarray}\label{Aexp31}
f\times\varphi_k^{1}(z_1,z_2)&=&\sum_{\beta_1+\beta_2=k}\int_{\mathbb C^2}f(z_1-w_1,z_2-w_2)
\varphi_{\beta_1}^0(w_1)\varphi_{\beta_2}^0(w_2)\nonumber\\
&&\times e^{\frac{i}{2}\text{Im}(z_1.\bar{w_1}+z_2.\bar{w_2})}dw_1dw_2,\nonumber\\
&=&\sum_{\beta_1+\beta_2=k}\int_{\mathbb C}f\times_{2}\varphi_{\beta_2}(z_1-w_1,z_2)
\varphi_{\beta_1}^0(w_1)e^{\frac{i}{2}\text{Im}(z_1.\bar{w_1})}dw_1\nonumber \\
&=&\sum_{\beta_1+\beta_2=k}\int_{\mathbb C}F_{z_2,\beta_2}(z_1-w_1)
\varphi_{\beta_1}^0(w_1)e^{\frac{i}{2}\text{Im}(z_1.\bar{w_1})}dw_1 \nonumber\\
&=&\sum_{\beta_1+\beta_2=k}F_{z_2,\beta_2}\times_1\varphi_{\beta_1}^0(z_1),\nonumber\\
\end{eqnarray}
where the function $F_{z_2,\beta_2}$ is defined by
\[F_{z_2,\beta_2}(z_1)=\int_{\mathbb C}f(z_1,z_2-w_2)
\varphi_{\beta_2}^0(w_2) e^{\frac{i}{2}\text{Im}(z_2.\bar{w_2})}dw_2.\]
For fixed $z_2,$ using the Minkowski integral inequality, it can show that
the function $F_{z_2,\beta_2}\in L^2(\mathbb C).$ By the hypothesis,
$f\times\varphi_k^{1}(z_1,z_2)=0,~\forall (z_1,z_2)\in S_2$ and
$\forall k\in\mathbb Z_+.$ Therefore, from equation (\ref{Aexp31}), we can write
\[\sum_{\beta_1+\beta_2=k}F_{z_2,\beta_2}\times_1\varphi_{\beta_1}^0(z_1)=0,~
\forall (z_1,z_2)\in S_2 \text{ and }\forall k\in\mathbb Z_+.\]
As the above equation is valid for each $k\in\mathbb Z_+,$ it follows that the sum
over each of the diagonal $\beta_1+\beta_2=k$ is zero. Using the facts that set
$\{\varphi_{\beta_1}^0:~\beta_1\in\mathbb Z_+\}$  form an orthogonal basis for
$L^2(\mathbb C)$ and $S_2=\mathbb C\times\Sigma_{2N},$ it follows that
\[F_{z_2,\beta_2}\times_1\varphi_{\beta_1}^0(z_1)=0,~\forall \beta_1, \beta_2\in\mathbb Z_+.\]
By equations (\ref{Aexp31}), we have
\[f\times\left(\varphi_{\beta_1}^0\varphi_{\beta_2}^0\right)(z_1,z_2)=0, \forall (z_1,z_2)\in S \text{ and }
\forall \beta_1, \beta_2\in\mathbb Z_+.\]
Now, we can write,
\begin{eqnarray*}
f\times\left(\varphi_{\beta_1}^0\varphi_{\beta_2}^0\right)(z_1,z_2)
&=& \int_{\mathbb C}G_{z_1,\beta_1}(z_2-w_2)
\varphi_{\beta_1}^0(w_2)e^{\frac{i}{2}\text{Im}(z_2.\bar{w_2})}dw_1 \nonumber\\
&=&G_{z_1,\beta_1}\times_{2}\varphi_{\beta_2}^0(z_2). \nonumber\\
\end{eqnarray*}
By the given condition, we have
$G_{z_1,\beta_1}\times_{2}\varphi_{\beta_2}^0(z_2)=0,\forall~\beta_1, \beta_2\in\mathbb Z_+.$
For each fixed $\beta_1,$ in view of Theorem \ref{Ath1}, we can conclude that
$G_{z_1,\beta_1}(z_2)=0,~\forall (z_1, z_2)\in\mathbb C^2.$ That is,
$f\times_1\varphi_{\beta_1}(z_1,z_2)=0,~\forall \beta_1\in\mathbb Z_+.$
Therefore,
\[f\times\varphi_k^{1}(z_1,z_2)=\sum_{\beta_1+\beta_2=k}\int_{\mathbb C}f\times_{1}\varphi_{\beta_1}(z_1,z_2-w_2)
\varphi_{\beta_1}^0(w_2)e^{\frac{i}{2}\text{Im}(z_2.\bar{w_2})}dw=0,\]
for all $k\in\mathbb Z_+.$ Hence, we conclude that $f=0,$ a.e. $\mathbb C^2.$
In order to prove the result for $n>2,$ we use the induction hypothesis on $n.$
Suppose the result is true for $n-1$ with $n>2.$ That is, the set
$S_{n-1}=\mathbb C^{n-2}\times\Sigma_{2N}$ is a set of
injectvity for the TSM for $L^2(\mathbb C^{n-1}).$ Let $k=\beta_1+\beta_2+\cdots+\beta_n
=\beta_1+|\gamma|$ and $z=(z_1,z_2,\ldots,z_n)=(z_1, z').$ Then, as similar to $\mathbb C^2$ case,
we can write
\[f\times\varphi_k^{n-1}(z_1,z')=\sum_{\beta_1+|\gamma|=k}F_{z',\gamma}\times_1\varphi_{\beta_1}^0(z_1).\]
Given that
\[f\times\varphi_k^{n-1}(z_1,z')=\sum_{\beta_1+|\gamma|=k}F_{z',\gamma}\times_1\varphi_{\beta_1}^0(z_1)=0
,~\forall (z_1,z')\in S_n \text{ and }\forall k\in\mathbb Z_+.\]
Since, the set $\{\varphi_{\beta_1}^0:~\beta_1\in\mathbb Z_+\}$ form an orthogonal basis
for $L^2(\mathbb C)$ and $S_n=\mathbb C^{n-1}\times\Sigma_{2N},$
it follows that $F_{z',\gamma}\times_1\varphi_{\beta_1}^0(z_1)=0, \forall \beta_1\in\mathbb Z_+$ and
$\forall ~|\gamma|\in\mathbb Z_+.$ This in turn implies that for each fixed $\gamma\in\mathbb Z_+^{n-1},$
we get $F_{z',\gamma}\times_1\varphi_{\beta_1}^0(z_1)=0, \forall \beta_1\in\mathbb Z_+$ and
$\forall (z_1,z')\in\mathbb C\times S_{n-1}.$ Once again using the orthogonality of
the set $\{\varphi_{\beta_1}^0:~\beta_1\in\mathbb Z_+\}$ in $L^2(\mathbb C),$
we conclude that
$F_{z',\gamma}(z_1)=0,\forall (z_1,z')\in\mathbb C\times S_{n-1}$ and $\forall\gamma\in\mathbb Z_+^{n-1}.$
Therefore, we have
\[\sum_{|\gamma|=k}F_{z',\gamma}(z_1)=\sum_{|\gamma|=k}\int_{\mathbb C^{n-1}}f(z_1,z'-w')
\Pi_{j=2}^n\varphi_{\beta_j}^0(w_j) e^{\frac{i}{2}\text{Im}(z'.\bar{w'})}dw'=0,\]
$\forall k\in\mathbb Z_+$ and $\forall z'\in S_{n-1}.$ By the assumption that $S_{n-1}$
is a set of injectivity for the TSM for $L^2(\mathbb C^{n-1}),$ we infer that
$f=0$ a.e. on $\mathbb C^n.$
\end{proof}

\begin{remark}\label{Ark1}
$(a).$ By the proof of Theorem \ref{Ath2}, it reveals that if $S$ is a set of injectivity
for the TSM for $L^p(\mathbb C),$ then the set $\mathbb C^{n-1}\times S$ will be a set of
injectivity for the TSM for $L^p(\mathbb C^n).$ That is, th sets of injectivity for the
TSM on $\mathbb C^n$ can be embedded into the sets injectivity for the TSM on $\mathbb C^{n+1}.$

\bigskip

$(b).$ We would like to mention that the question of Coxeter system of odd number
of hyperplanes intersecting along a line is a set of injectivity for the TSM for
$L^p(\mathbb C^n)~(n\geq2)$ can be answered, once we know that any Coxeter system
of odd number of lines can be a set of injectivity for the TSM for $L^p(\mathbb C).$
However, the later question on the complex place $\mathbb C$ is itself an open problem,
see \cite{Sri2}.
\end{remark}

Next, we prove that the set $S_{n+1}=S_R^{2n-1}\times\mathbb C$ is a set of injectivity
for the TSM for a ceratin class of functions on $\mathbb C^{n+1}.$ Let $z=(z',z_{n+1})\in
\mathbb C^{n+1}.$
\begin{theorem} \label{Ath4}
Let $f$ be a function on $\mathbb C^{n+1}$ such that $e^{\frac{1}{4}|z'|^2}f(z)\in L^p(\mathbb C^{n+1})$
for $1\leq p\leq\infty.$ Suppose $f\times\mu_r(z)=0,~\forall r>0$ and $\forall z\in S_{n+1}.$
Then $f=0$ a.e. on $\mathbb C^{n+1}.$
\end{theorem}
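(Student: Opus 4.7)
The plan is to adapt the product-structure slicing argument from the proof of Theorem \ref{Ath2}, but reducing at the final step to Theorem \ref{Ath3} rather than to Theorem \ref{Ath1}. Polar decomposition on $\mathbb C^{n+1}$ converts the hypothesis $f\times\mu_r=0$ on $S_{n+1}=S_R^{2n-1}\times\mathbb C$ into $f\times\varphi_k^n(z',z_{n+1})=0$ on the same set, for every $k\in\mathbb Z_+$. Using the factorization
\[
\varphi_k^n(z',z_{n+1})=\sum_{\beta_1+\beta_2=k}\varphi_{\beta_1}^{n-1}(z')\varphi_{\beta_2}^0(z_{n+1}),
\]
the hypothesis can be rewritten as
\[
\sum_{\beta_1+\beta_2=k}F_{\beta_2}\times_1\varphi_{\beta_1}^{n-1}(z',z_{n+1})=0,
\qquad (z',z_{n+1})\in S_R^{2n-1}\times\mathbb C,
\]
where the partial twisted convolution in the $z_{n+1}$-variable is
\[
F_{\beta_2}(z',z_{n+1})=\int_{\mathbb C}f(z',z_{n+1}-w)\varphi_{\beta_2}^0(w)e^{\frac{i}{2}\text{Im}(z_{n+1}\bar w)}dw.
\]

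Each $F_{\beta_2}$ is an eigenfunction of the special Hermite operator in the $z_{n+1}$-variable with eigenvalue $2\beta_2+1$, and the operation $\times_1\varphi_{\beta_1}^{n-1}$ (acting only in $z'$) commutes with this operator and preserves the eigenvalue. Applying the special Hermite operator in the $z_{n+1}$-variable repeatedly to the identity above produces, for each fixed $k$, a Vandermonde system in the distinct eigenvalues $\{2\beta_2+1:0\leq\beta_2\leq k\}$; solving it pointwise in $z'$ forces
\[
F_{\beta_2}\times_1\varphi_{\beta_1}^{n-1}(z',z_{n+1})=0
\qquad \forall z'\in S_R^{2n-1},\ z_{n+1}\in\mathbb C,\ \beta_1,\beta_2\in\mathbb Z_+.
\]
Fixing $\beta_2$ and $z_{n+1}$ and letting $\beta_1$ run over $\mathbb Z_+$, the inverse polar decomposition on $\mathbb C^n$ gives $F_{\beta_2}(\cdot,z_{n+1})\times\mu_r(z')=0$ for every $r>0$ and every $z'\in S_R^{2n-1}$.

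Minkowski's integral inequality applied to the integral defining $F_{\beta_2}$, combined with the hypothesis $e^{|z'|^2/4}f\in L^p(\mathbb C^{n+1})$ and Fubini, then yields $e^{|z'|^2/4}F_{\beta_2}(\cdot,z_{n+1})\in L^p(\mathbb C^n)$ for a.e.\ $z_{n+1}$. Theorem \ref{Ath3} applies on each such slice and gives $F_{\beta_2}\equiv 0$ for every $\beta_2\in\mathbb Z_+$. Since $F_{\beta_2}$ is precisely the $\beta_2$-th special Hermite projection of $f$ in the $z_{n+1}$-variable, the expansion (\ref{ACexp16}) applied in that variable forces $f=0$; to ensure the expansion is available a.e.\ in $z'$, one first mollifies $f$ by right twisted convolution in $z_{n+1}$ with a radial, compactly supported, smooth approximate identity, which commutes with $\mu_r$ by radiality and preserves the weighted class, and then passes to the limit.

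The main obstacle is the separation step: making the Vandermonde/eigenfunction argument rigorous pointwise in $z'\in S_R^{2n-1}$ (where standard orthogonality in $L^2(\mathbb C_{z_{n+1}})$ is not directly available), together with verifying at the endpoints $p=1$ and $p=\infty$ that the slices $F_{\beta_2}(\cdot,z_{n+1})$ genuinely lie in the weighted class on $\mathbb C^n$ to which Theorem \ref{Ath3} can be applied.
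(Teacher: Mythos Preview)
Your proposal is correct and follows the same overall strategy as the paper: write $f\times\varphi_k^n$ via a product decomposition in the $(z',z_{n+1})$ variables, separate the $z_{n+1}$-components, and reduce to Theorem~\ref{Ath3} on each $z_{n+1}$-slice. Two technical choices differ from the paper's proof. First, the paper uses the full $(n+1)$-fold tensor decomposition $\varphi_k^n=\sum_{|\gamma|+\beta_{n+1}=k}\prod_{j=1}^{n+1}\varphi_{\beta_j}^0$, carrying a multi-index $\gamma\in\mathbb Z_+^n$ throughout and only regrouping $\sum_{|\gamma|=k}\prod_{j=1}^n\varphi_{\beta_j}^0=\varphi_k^{n-1}$ at the very end; your two-factor split $\varphi_{\beta_1}^{n-1}(z')\varphi_{\beta_2}^0(z_{n+1})$ is more economical and reaches the same reduction directly. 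Second, for the separation in $z_{n+1}$ the paper simply invokes orthogonality of $\{\varphi_{\beta_{n+1}}^0\}$ in $L^2(\mathbb C)$, whereas you use a Vandermonde argument based on the distinct eigenvalues $2\beta_2+1$ of the special Hermite operator in $z_{n+1}$. Your route exploits that $f\times\varphi_k^n$ is real analytic and vanishes identically in $z_{n+1}$ for each fixed $z'\in S_R^{2n-1}$, so the differential operator $A_{z_{n+1}}$ may be applied pointwise; this is a legitimate alternative that sidesteps any need to verify $L^2$-membership in the $z_{n+1}$-variable at fixed $z'$, a point the paper passes over. The endpoint concerns you flag (slicing into the weighted class for $p=1,\infty$, and the mollification to justify~(\ref{ACexp16}) in $z_{n+1}$) are handled no more explicitly in the paper than in your sketch.
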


\begin{proof}
Let $k=\beta_1+\beta_2+\cdots+\beta_{n+1}=|\gamma|+ \beta_{n+1}.$
Then, we can write
\[f\times\varphi_k^{n}(z', z_{n+1})=\sum_{|\gamma|+ \beta_{n+1}=k}
F_{z',\gamma}\times_{(n+1)}\varphi_{\beta_{n+1}}^0(z_{n+1}).\]
By the given conditions, we have
\[f\times\varphi_k^{n}(z', z_{n+1})=\sum_{|\gamma|+ \beta_{n+1}=k}
F_{z',\gamma}\times_{(n+1)}\varphi_{\beta_{n+1}}^0(z_{n+1})=0,\]
for all $(z', z_{n+1})\in S_{n+1}\text{ and }\forall k\in\mathbb Z_+.$
Since, the set $\{\varphi_{\beta_1}^0: \beta_{n+1}\in\mathbb Z_+\}$ form an
orthogonal basis for $L^2(\mathbb C),$ it follows that
$F_{z',\gamma}\times_{(n+1)}\varphi_{\beta_1}^0(z_{n+1})=0, \forall \beta_{n+1}\in\mathbb Z_+$ and
$\forall ~|\gamma|\in\mathbb Z_+.$ This in turn implies that for each fixed $\gamma\in\mathbb Z_+^{n},$
$F_{z',\gamma}\times_{(n+1)}\varphi_{\beta_1}^0(z_{n+1})=0, \forall \beta_{n+1}\in\mathbb Z_+$ and
$\forall (z',z_{n+1})\in S_{n+1}.$ Once again using the orthogonality of
the set $\{\varphi_{\beta_{n+1}}^0:~\beta_{n+1}\in\mathbb Z_+\}$ in $L^2(\mathbb C),$
we conclude that
$F_{z',\gamma}(z_{n+1})=0,\forall \in S_{n+1}$ and $\forall\gamma\in\mathbb Z_+^{n}.$
Hence, we can write
\[\sum_{|\gamma|=k}F_{z',\gamma}(z_{n+1})=\sum_{|\gamma|=k}\int_{\mathbb C^{n}}f(z'-w',z_{n+1})
\Pi_{j=1}^n\varphi_{\beta_j}^0(w_j) e^{\frac{i}{2}\text{Im}(z'.\bar{w'})}dw'=0,\]
$\forall k\in\mathbb Z_+$ and $\forall z'\in S_R^{2n-1}.$ Therefore, in view of Theorem \ref{Ath3},
we infer that $f=0$ a.e. $\mathbb C^n.$
\end{proof}

Since the set $S_n=\mathbb C^{n-1}\times\Sigma_{2N}$ is a set of injectivity
for the TSM for $L^p(\mathbb C^n),$ with $1\leq p\leq2.$ As a dual problem,
it is natural to ask that $S_n$ is a set of density for $L^q(\mathbb C^n),$
for $2\leq q<\infty.$ Let $C_c^\sharp(\mathbb C)$ denote the space of radial
compactly supported continuous functions on $\mathbb C.$ Let
$\tau_zf(w)=f(z-w)e^{\frac{i}{2}\text{Im}(z.\bar w)}.$

\begin{proposition}\label{Aprop3}
The subspace $\mathscr F({S_n})
={\emph{Span}}\left\{\tau_zf: z\in S_n, f\in C_c^\sharp(\mathbb C^n) \right\}$
is dense in $L^q(\mathbb C),$ for $2\leq q<\infty.$
\end{proposition}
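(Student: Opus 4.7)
My plan is to establish the density by a Hahn--Banach duality argument, reducing the claim to the injectivity in Theorem \ref{Ath2}. Since $2 \leq q < \infty$, the dual exponent $q'$ lies in $(1,2]$, and $\mathscr F(S_n)$ is dense in $L^q(\mathbb C^n)$ if and only if every $g \in L^{q'}(\mathbb C^n)$ that annihilates $\mathscr F(S_n)$ vanishes. So suppose $g \in L^{q'}(\mathbb C^n)$ satisfies $\langle g, \tau_z f \rangle = 0$ for every $z \in S_n$ and every $f \in C_c^\sharp(\mathbb C^n)$; the goal is to deduce $g = 0$.

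The first step is to recast the annihilator condition as the vanishing of a twisted convolution. The substitution $u = z - w$, combined with the identity $\text{Im}(z \cdot \overline{z-u}) = -\text{Im}(z \cdot \bar u)$, gives
\[
\langle g, \tau_z f \rangle = \int_{\mathbb C^n} g(z-u)\,\overline{f(u)}\,e^{\frac{i}{2}\text{Im}(z \cdot \bar u)}\, du = (g \times \bar f)(z).
\]
Since $\bar f$ sweeps out the same radial class $C_c^\sharp(\mathbb C^n)$ as $f$, we obtain $(g \times f)(z) = 0$ for every radial, compactly supported, continuous $f$ and every $z \in S_n$. Writing $f(w) = h(|w|)$ and integrating in polar coordinates yields
\[
(g \times f)(z) = c_n \int_0^\infty h(r)\, r^{2n-1}\, (g \times \mu_r)(z)\, dr,
\]
so the vanishing for every $h \in C_c([0,\infty))$ forces $(g \times \mu_r)(z) = 0$ for almost every $r > 0$ at each $z \in S_n$.

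To match the hypotheses of Theorem \ref{Ath2}, I would next smooth $g$ by right twisted convolution with a radial $\psi_\epsilon \in C_c^\infty(\mathbb C^n)$ forming an approximate identity. The resulting $g_\epsilon := g \times \psi_\epsilon$ is continuous and lies in $L^{q'}(\mathbb C^n)$ with $q' \in (1,2]$, which is exactly within the class $L^p$ ($1 \leq p \leq 2$) covered by Theorem \ref{Ath2}; by the commutativity of twisted convolution with a radial factor, $g_\epsilon$ still annihilates every $\tau_z f$ on $S_n$. Continuity of $g_\epsilon$ then upgrades the a.e.\ vanishing in $r$ to $(g_\epsilon \times \mu_r)(z) = 0$ for every $r > 0$ and every $z \in S_n$, and Theorem \ref{Ath2} gives $g_\epsilon \equiv 0$; letting $\epsilon \to 0$ in $L^{q'}$ forces $g = 0$. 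The principal technical obstacle is this smoothing step: one must verify carefully that right twisted convolution with a radial smooth kernel preserves the annihilation against the family $\{\tau_z f : z \in S_n,\ f \in C_c^\sharp(\mathbb C^n)\}$, an identity that ultimately rests on the associativity of twisted convolution when one of the factors is radial.
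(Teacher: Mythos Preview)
Your proposal is correct and follows essentially the same route as the paper: Hahn--Banach duality, rewriting the annihilation condition as $(g\times f)(z)=0$ for radial $f$, polar decomposition to reach $g\times\mu_r(z)=0$, and then an appeal to Theorem~\ref{Ath2}. The only difference is cosmetic: where you carefully mollify $g$ on the right by a radial $\psi_\epsilon$ (using associativity and the fact that $\psi_\epsilon\times f\in C_c^\sharp(\mathbb C^n)$) to upgrade the a.e.\ vanishing in $r$ to vanishing for all $r$, the paper simply writes ``by differentiating'' and invokes Theorem~\ref{Ath2} directly---the smoothing being implicitly absorbed into the proof of that theorem.
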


\begin{proof}
Let $\frac{1}{p}+\frac{1}{q}=1.$ Then $1\leq p\leq2.$ By Hahn-Banach theorem,
it is enough to show that $\mathscr F(S_n)^\bot=\{0\}.$ Let $g\in L^p(\mathbb C^n)$
be such that
\[\int_{\mathbb C}\tau_zf(w)g(w)dw=0, z\in\Sigma_{2N}, ~\forall f\in C_c^\sharp(\mathbb C^n).\]
That is, \[\overline{\bar g\times\bar f}(z)= f\times g(z)=0.\]
Let the support of $f$ be contained in $[0,t].$ Then by passing to the polar
decomposition, we get
\[\int_{r=0}^t\bar g\times\mu_r(z)\bar f(r)r^{2n-1}dr=0.\]
By differentiating the above equation, it follows that $\bar g\times\mu_t(z)=0, \forall t>0$
and $\forall z\in S_n.$ Thus by Theorem \ref{Ath2}, we conclude that $g=0$ a.e. on
$\mathbb C^n.$
\end{proof}

\begin{remark}\label{Ark2}
$(a).$
In the article by Agranovsky et al. \cite{AR}, it has been shown that boundary
of any bounded domain in $\mathbb C^n$ is a set of injectivity for the TSM
for a class of functions $f$ satisfying
$f(z)e^{(\frac{1}{4}+\epsilon)|z|^2}\in L^p(\mathbb C^n),$ for some $\epsilon>0$
and $1\leq p\leq\infty$. However to prove this result for $\epsilon=0$
is an open problem. The sphere $S_R^{2n-1}$ is an example with $\epsilon=0,$
as mentioned in Theorem \ref{Ath3}.

\smallskip

We are thinking to do away with exponential condition. Though it is not possible
for sphere, because of the relations
$\varphi^{n-1}_k\times\mu_r(z)=B(n,k)\varphi^{n-1}_k(r)\varphi^{n-1}_k(|z|).$
We are working for the real analytic curves $\gamma$ having non-constant
curvature can be the sets of injectivity for the TSM for $L^q(\mathbb C) $ with $1\leq q\leq2.$
We know that the spectral projections $Q_k=f\times\varphi^{n-1}_k$ is a real analytic
function on $\mathbb C^n. $ Using the Hecke-Bochner identity for the spectral
projections, we have derived a real analytic expansion for $Q_k$ in the article
\cite{Sri2} as
\[
Q_k(z)=\sum_{p=0}^kC_{k-p}^{p0}z^p\varphi_{k-p}^{p}(z)+\sum_{q=0}^\infty C_{k}^{0q}\bar z^q\varphi_{k}^{q}(z).
\]
Suppose $Q_k(z)=0,~\forall k\in\mathbb Z_+,$ and $\forall~z\in\gamma.$ For a curve $\gamma(t)=r(t)e^{it}$
of non-constant curvature, radius $r(t)$ will vary in an interval. The fact
that $Q_k$ is a real analytic function, $Q_k(\gamma(t))$ must be a real analytic
function, which vanishes over an interval. It is interesting to know that can $Q_k(\gamma(t))$ will
vanish for all $t\in\mathbb R.$ In particular, it can be possible when $r(t)$ is a non-periodic function.
For example, spiral $\gamma(t)=\left\{\left(e^{t}\cos t, e^{t}\sin t\right): t\in (-\infty,0]\right\}.$
Hence spiral is a set of injectivity for the TSM for $L^q(\mathbb C)$
with $1\leq q\leq\infty.$ For a more general example, let $\gamma(t)=r(t)e^{it},$
where $r(t)$ be a non-periodic real analytic function on $[0,\infty)$ with
$\lim_{t\rightarrow\infty}r(t)=0.$ Then $\gamma(t)$ is a set of injectivity for the
TSM for $L^q(\mathbb C)$ with $1\leq q\leq\infty.$ Moreover, $\gamma(t)$ is a
determining curve for any real analytic function on $\mathbb C.$

\smallskip

$(b).$ We are working for the set $S=\mathbb R\times\mathbb R\cup\mathbb R\times i\mathbb R$
can be set of injectivity for the TSM for $L^q(\mathbb C^2), ~1\leq q\leq2.$
We start with a small class of functions $f(z_1, z_2)=z_1^p\varphi^0_{\alpha_1}(z_1)h(z_2).$
Suppose $f\times\mu_r(z_1, z_2)=0,~\forall r>0$ and
$\forall~(z_1, z_2)\in\mathbb R\times\mathbb R\cup\mathbb R\times i\mathbb R.$
Then $f=0$ a.e. on $\mathbb C^2.$ If it goes to happen that $S$ is a set of
injectivity for the TSM for $L^q(\mathbb C^2)$ then it would be a surprise
contrast to the sets of injectivity for the Euclidean spherical means on
$\mathbb R^4,$ where the minimal dimension of a set of injectivity is three.
\end{remark}

\noindent{\bf Acknowledgements:}
The author wishes to thank E. K. Narayanan and S. Thangavelu for several
fruitful discussions, during my visit to IISc, Bangalore. The author would
also like to gratefully acknowledge the support provided by the University
Grant Commission and the Department of Atomic Energy, Government of India.

\bigskip


\begin{thebibliography}{11}

\bibitem{ABK} M. L. Agranovsky, C. Berenstein and P. Kuchment, {\em Approximation by spherical waves in
$L^p$-spaces,} J. Geom. Anal. 6 (1996), no. 3, 365--383 (1997).

\bibitem{AQ} M. L. Agranovsky and E. T. Quinto, {\em Injectivity sets for the Radon transform over circles
and complete systems of radial functions,} J. Funct. Anal.,  139  (1996),  no. 2, 383--414.

\bibitem{AR} M. L. Agranovsky and R. Rawat, {\em Injectivity sets for spherical means on the Heisenberg group,}
J. Fourier Anal. Appl., 5 (1999), no. 4, 363--372.

\bibitem{AK}  G. Ambartsoumian and P. Kuchment, {\em On the injectivity of the circular Radon transform,}
 Inverse Problems 21 (2005), no. 2, 473--485.

\bibitem{AVZ} M. L. Agranovsky, V. V. Volchkov and L. A. Zalcman, {\em Conical uniqueness sets for the
spherical Radon transform,} Bull. London Math. Soc. 31 (1999), no. 2, 231-–236.

\bibitem{BP} E. Binz and S. Pods, {\em The geometry of Heisenberg groups. With applications in signal theory, optics,
quantization, and field quantization. With an appendix by Serge Preston,}
 Mathematical Surveys and Monographs, 151. AMS, Providence, RI, 2008.

\bibitem{CCG} O. Calin, D. Chang and P. Greiner, {\em Geometric analysis on the Heisenberg group and its generalizations,}
AMS/IP Studies in Advanced Mathematics, 40. American Mathematical Society, Providence, RI; International Press,
Somerville, MA, 2007.

\bibitem{CDPT} L. Capogna, D. Danielli, S. D. Pauls and J. T. Tyson, {\em An introduction to the Heisenberg
group and the sub-Riemannian isoperimetric problem,} Progress in Mathematics, 259. Birkhauser Verlag, Basel, 2007.

\bibitem{CH} R. Courant and D. Hilbert, {\em Methods of Mathematical Physics,}  Vol. 2.

\bibitem{H} R. Howe, {\em Roger On the role of the Heisenberg group in harmonic analysis,}
 Bull. Amer. Math. Soc. (N.S.) 3 (1980), no. 2, 821–843.

\bibitem{N} R. Narasimhan, {\em Analysis on real and complex manifolds,} North-Holland Publishing Co.,
Amsterdam, 1985.

\bibitem{NRR} E. K. Narayanan, R. Rawat and S. K.  Ray, {\em Approximation by $K$-finite functions in
$L\sp p$ spaces,} Israel J. Math.  161  (2007), 187-207.

\bibitem{NT1} E. K. Narayanan and S. Thangavelu, {\em Injectivity sets for spherical means on the
Heisenberg group,} J. Math. Anal. Appl. 263 (2001), no. 2, 565-579.

\bibitem{PS} V. Pati and A. Sitaram, {\em Some questions on integral geometry on Riemannian manifolds,}
 Ergodic theory and harmonic analysis (Mumbai, 1999). Sankhya Ser. A 62 (2000), no. 3, 419-424.

\bibitem{RS} R. Rawat and A. Sitaram, {\em Injectivity sets for spherical means on $\mathbb R^n$
and on symmetric spaces,} J. Fourier Anal. Appl. 6 (2000), no. 3, 343-348.

\bibitem{Ru} W. Rudin, {\em Function theory in the unit ball of ~$\mathbb C^n$,} Springer-Verlag,
 New York-Berlin, 1980.

\bibitem{ST} G. Sajith and S. Thangavelu, {\em On the injectivity of twisted spherical means on ~$\mathbb C^n$,}
 Israel J. Math. 122 (2001), 79-92.

 \bibitem{Sri} R. K. Srivastava, {\em Sets of injectivity for weighted twisted spherical means and support theorems,}
J. Fourier Anal. Appl.,  18 (2012), no. 3, 592-608.

\bibitem{Sri2} R. K. Srivastava, {\em Coxeter system of lines are sets of injectivity for the twisted
spherical means on $\mathbb C$,} (communicated).
DOI:~{\color{blue}\href{http://arxiv.org/find/all/1/au:+srivastava_r_k/0/1/0/all/0/1}{arXiv:1103.4571v1}}

\bibitem{T} S. Thangavelu, {\em An introduction to the uncertainty principle},
Prog. Math. 217, Birkhauser, Boston (2004).

\end{thebibliography}
\end{document}